\newcommand{\IP}[2]{\left< #1 , #2 \right>}
\newcommand{\R}{\ensuremath{\mathbb{R}}}
\newcommand{\D}{\ensuremath{\mathbb{D}}}
\renewcommand{\S}{\ensuremath{\mathbb{S}}}
\newtheorem{thm}{Theorem}[section]
\newtheorem{cor}[thm]{Corollary}
\newtheorem{prop}[thm]{Proposition}
\newtheorem{lem}[thm]{Lemma}
\newtheorem*{uthm}{Theorem}
\theoremstyle{remark}
\newtheorem*{rmk}{Remark}
\title{Minimal hypersurfaces in the ball with free boundary}
\author{Glen Wheeler and Valentina-Mira Wheeler$^*$}
\thanks{*: Corresponding author.}
\address{Glen Wheeler\\
           Institute for Mathematics and its Applications \\
           University of Wollongong\\
           Northfields Avenue\\
           Wollongong, NSW, 2522, Australia\\
           email: glenw@uow.edu.au }
\address{ Valentina-Mira Wheeler \\
           Institute for Mathematics and its Applications \\
           University of Wollongong\\
           Northfields Avenue\\
           Wollongong, NSW, 2522, Australia\\
           email: vwheeler@uow.edu.au
           }
\keywords{minimal surfaces, mean curvature flow, free boundary conditions, geometric
analysis} \subjclass[2000]{49Q05\and 53A10}
\begin{document}

\begin{abstract}
In this note we use the strong maximum principle and integral estimates prove
two results on minimal hypersurfaces $F:M^n\rightarrow\R^{n+1}$ with free
boundary on the standard unit sphere.
First we show that if $F$ is graphical with respect to any Killing field, then
$F(M^n)$ is a flat disk. This result is independent of the topology or number
or boundaries.  Second, if $M^n = \D^n$ is a disk, we show the supremum of the
curvature squared on the interior is bounded below by $n$ times the infimum of
the curvature squared on the boundary.
These may be combined the give an impression of the curvature of non-flat
minimal hyperdisks with free boundary.
\end{abstract}

\maketitle

\section{Introduction}

Recently, minimal surfaces with free boundary have received much attention.
A landmark result due to Nitsche is:

\begin{uthm}[Theorem 1 in \cite{nitsche1985}]
Let $F:\D^2 \rightarrow \R^{3}$ be a proper branched minimal immersion with free boundary on the standard unit sphere.
Then $F(\D^2)$ is a flat disk.
\end{uthm}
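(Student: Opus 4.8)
The plan is to exploit that a branched minimal immersion of the disk is a harmonic map in conformal coordinates, and to reduce flatness to the vanishing of the Hopf differential. Since $\D^2$ is simply connected I would first choose a global conformal parameter $z = u+iv$, so that minimality becomes harmonicity $F_{z\bar z}=0$ together with the conformality relation $\langle F_z, F_z\rangle = 0$. The Hopf differential $\Phi\,dz^2$, with $\Phi = \langle F_{zz}, N\rangle$, is then holomorphic on the open disk, and its zeros record the umbilic points; at the isolated branch points it extends as a bounded, hence holomorphic, quadratic differential. Because a minimal surface in $\R^3$ is totally umbilic if and only if it is planar, the theorem reduces to showing $\Phi \equiv 0$.

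The key step is translating the free boundary condition into a reality statement for $\Phi$. Writing $z = re^{i\theta}$, a quadratic differential $\Phi\,dz^2$ is real along $\partial\D^2$ precisely when $z^2\Phi \in \R$ on $|z|=1$, and the imaginary part of $z^2\Phi$ equals, up to a positive conformal factor, the shear $\mathrm{II}(\nu, T)$ of the second fundamental form in the orthonormal frame given by the outward conormal $\nu$ and the unit tangent $T$ to $\partial\D^2$. The free boundary condition is exactly that $F(\D^2)$ meets the unit sphere $\S^2$ orthogonally, so along the boundary the outward conormal coincides with the position vector, $\nu = F$, the outer unit normal of $\S^2$. Differentiating this identity along the boundary curve gives $D_T\nu = D_T F = T$, whence $\mathrm{II}(\nu, T) = \langle D_T\nu, N\rangle = \langle T, N\rangle = 0$. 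Thus $z^2\Phi$ is real on $|z|=1$, i.e.\ $\Phi\,dz^2$ is real along $\partial\D^2$.

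With the reality condition in hand I would finish by Schwarz reflection. The double of the closed disk across its boundary is the Riemann sphere $\hat{\mathbb{C}} = \mathbb{C}\cup\{\infty\}$, and the reality of $\Phi\,dz^2$ along $\partial\D^2$ is exactly the gluing condition that lets reflection extend $\Phi\,dz^2$ to a holomorphic quadratic differential on $\hat{\mathbb{C}}$. Since $\deg K_{\hat{\mathbb{C}}}^{\otimes 2} = -4 < 0$, the sphere carries no nonzero holomorphic quadratic differential, forcing $\Phi \equiv 0$. Hence the surface is totally umbilic and therefore planar, and a plane meets $\S^2$ orthogonally only if it passes through the origin, so $F(\D^2)$ is a flat equatorial disk. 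I expect the main obstacle to be the analytic bookkeeping at the boundary and at the branch points: one must verify that the branched immersion is regular enough up to $\partial\D^2$ for $\Phi\,dz^2$ to extend continuously and satisfy the reality condition there, and that the branch points produce no poles, so that the reflected differential is genuinely holomorphic on all of $\hat{\mathbb{C}}$.
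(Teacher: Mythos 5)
Your proposal is correct and takes essentially the same approach as the proof this paper attributes to Nitsche: the paper only quotes the theorem from \cite{nitsche1985} without reproving it, remarking that ``the proof exploits the Hopf differential via complex analysis,'' which is exactly your argument (holomorphicity of $\Phi\,dz^2$ from minimality, reality along $\partial\D^2$ forced by the free boundary condition via $\mathrm{II}(\nu,T)=0$, Schwarz reflection to $\hat{\mathbb{C}}$, and vanishing because the sphere carries no nonzero holomorphic quadratic differential). The technical points you flag---regularity up to the boundary and removability of $\Phi$ at branch points---are genuine but standard, and are dealt with in Nitsche's original paper.
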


The proof exploits the Hopf differential via complex analysis.

There has been much work extending this result in various directions.
This activity has yielded some excellent results, as a small selection we refer
to
\cite{ambrozio2016gap,choe2011capillary,fraser2011first,fraser2012sharp,li2017gap,lopez2014capillarycone,lopez2014capillary,vogel1988uniqueness}.
Fraser-Schoen \cite{fraser2014uniqueness} made a recent influential
contribution, that includes an extension of Nitsche's Theorem to arbitrary
codimension.

In this note we study the higher dimensional analogue of this problem, for
minimal hypersurfaces with free boundary in the unit ball.
Although there is a wealth of knowledge available on the problem for $n=2$, in
the higher dimensional case results are much more scarce.
One reason for this is that incredibly powerful complex analytical techniques
that apply for surfaces do not seem to carry over to hypersurfaces.
Nevertheless, progress continues to be made: see Sargent \cite{sargent2016},
Ambrozio, Carlotto-Sharp \cite{ambrozio2016index}, Smith-Stern-Tran-Zhou
\cite{smith2017morse} and Tran \cite{tran2016index} for
some new index bounds for minimal hypersurfaces with free boundary,
Mondino-Spadaro \cite{monspad2017} for a new characterisation of free boundary
minimal submanifolds, and Li-Zhou \cite{GLZ2017,li2016min} for far-reaching
min-max and regularity theory, including an extension of the classical program
of Almgren \cite{almgren1,almgren2} (see Pitts \cite{pitts} and Schoen-Simon
\cite{schoen1981} for further classical theory) to the case of minimal
hypersurfaces with free boundary, for example.

Our first contribution is on the question of uniqueness of minimal embedded
$n$-disks. Note that this result is independent of topology. Under a
generalised graphicality condition, the only minimal hypersurfaces with free
boundary on the standard sphere are flat disks.

\begin{thm}[Uniqueness of $n$-dimensional graphical disks]
Let $F:M^n\rightarrow\R^{n+1}$ be a smooth Killing-graphical minimal
hypersurface with free boundary on $\S^n\subset\R^{n+1}$.  Then $M^n = \D^n$
and $M := F(M^n)$ is a standard flat disk.
\label{thmuniqueness}
\end{thm}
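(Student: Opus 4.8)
The plan is to manufacture two Jacobi fields on $M^n$ from the data of the problem and then show that they force the support function to vanish identically. Let $\nu$ be a global unit normal and fix the Killing field $Y$ with respect to which $F$ is graphical; set $u:=\langle Y,\nu\rangle$. Since the flow of $Y$ is by ambient isometries it preserves minimality, so $u$ is a Jacobi field, $\Delta u+|A|^2u=0$ (the ambient Ricci term vanishes in $\R^{n+1}$). The graphicality hypothesis says precisely that $u$ has no zeros on $M^n$, so after replacing $Y$ by $-Y$ if necessary we may take $u>0$, and by compactness (including $\partial M$) $u\ge c>0$. Next I introduce the support function $s:=\langle F,\nu\rangle$. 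A direct computation using $\nabla_i s=-h_{ij}\langle F,e_j\rangle$, the Codazzi identity, and $H\equiv0$ gives $\Delta s=-H-|A|^2s=-|A|^2s$, so $s$ is \emph{also} a Jacobi field. The free boundary condition enters here: orthogonality to $\S^n$ forces the outward conormal $\eta$ to equal the position vector $F$ along $\partial M$, whence $s=\langle F,\nu\rangle=\langle\eta,\nu\rangle=0$ on $\partial M$.

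The heart of the argument is to compare the two fields. Put $w:=s/u$, which is smooth on all of $M^n$ because $u\ge c>0$. Writing $u^2\nabla w=u\nabla s-s\nabla u$ and using both Jacobi equations,
\[
\Div\!\left(u^2\nabla w\right)=u\,\Delta s-s\,\Delta u=-|A|^2us+|A|^2su=0.
\]
Thus $w$ solves a homogeneous, uniformly elliptic, divergence-form equation with the zero Dirichlet data $w|_{\partial M}=0$. Multiplying by $w$ and integrating by parts,
\[
\int_{M}u^2|\nabla w|^2=\int_{\partial M}u^2\,w\,\partial_\eta w=0,
\]
so $\nabla w\equiv0$ and hence $w\equiv0$; that is, $s\equiv0$ on $M^n$. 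This is exactly where the two hypotheses combine: graphicality makes $u^2$ a legitimate positive weight, while the free boundary condition supplies the vanishing boundary values.

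Finally I would upgrade $s\equiv0$ to the stated geometric rigidity. The identity $\langle F,\nu\rangle\equiv0$ says the position vector is everywhere tangent to $M$, so $M$ is invariant under the dilation flow $x\mapsto e^t x$, i.e.\ $M$ is a cone with vertex at the origin. A minimal cone that is a smooth immersed hypersurface must be a hyperplane through the origin; intersecting with the unit ball and imposing orthogonality along $\S^n$ then identifies $M$ with the equatorial flat disk, so $M^n=\D^n$ and $M$ is a standard flat disk.

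I expect the main obstacle to be this last step: converting $s\equiv0$ into genuine flatness requires the cone-to-hyperplane rigidity, and one must argue it carefully for an immersion, controlling the cone point and confirming that smoothness forces the link to be a great $\S^{n-1}$. A secondary technical point worth checking is that graphicality persists up to $\partial M$, so that $u$ is bounded below there; otherwise the weight $u^2$ could degenerate at the boundary and the elliptic comparison yielding $w\equiv0$ would fail.
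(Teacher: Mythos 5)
Your proof is correct, and although it is built from the same two ingredients as the paper's --- the graph quantity $\IP{V}{\nu}$ and the support function $\IP{F}{\nu}$, both Jacobi fields, with the free boundary condition forcing the latter to vanish on $\partial M$ --- the analytic mechanism is genuinely different. The paper studies the product $Q=\IP{F}{\nu}^2/\IP{V}{\nu}^2$, which is exactly the square of your quotient $w=s/u$; it derives the differential inequality $\Delta^M Q\ge 2\frac{\nabla v_V}{v_V}\cdot\nabla Q$ (Lemma \ref{g2v2evolution}) and applies the strong maximum principle: no interior maximum is possible, $Q=0$ on $\partial M$ by the free boundary condition, hence $Q\equiv 0$. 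You instead exploit that $u$ and $s$ solve the same Jacobi equation, observe the exact divergence identity $\Div\left(u^2\nabla w\right)=u\,\Delta s-s\,\Delta u=0$ --- the classical ground-state/Jacobi-quotient substitution --- and kill $w$ with a single integration by parts against the zero Dirichlet data. Your route buys an exact identity rather than an inequality, dispenses entirely with the paper's auxiliary computation for $\Delta^M Q$, and leaves no gradient coefficient $\nabla v_V/v_V$ to control (the issue the paper flags in its second remark after the proof). The trade-off is that your weight $u^2$ must be bounded below up to $\partial M$ --- precisely the caveat you raise yourself --- whereas the pointwise maximum principle needs only local boundedness of its coefficients in the interior, which is what underlies the paper's remark that $s_V>0$ is really only needed there. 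The endgame is the same in both proofs: $\IP{F}{\nu}\equiv 0$ makes $M$ a cone with vertex at the origin, and smoothness at the vertex rules out every cone except a hyperplane, so $M$ is the flat equatorial disk; the paper disposes of this step in one sentence, exactly the cone-rigidity argument you sketch.
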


In the above statement, we use Killing-graphical to mean that the function
$s_V:M\rightarrow\R$ given by
\begin{equation}
\label{EQsV}
s_V(x) := \IP{\nu^M(x)}{V(x)}\,,
\end{equation}
where $\nu^M:M\rightarrow\R^{n+1}$ is a unit normal vector field along $F(M^n)$,
and $V:\R^{n+1}\rightarrow\R^{n+1}$ is a Killing field,
is strictly positive.

One may rephrase Theorem \ref{thmuniqueness} as:
If $s_V > 0$, then $F(M^n)$ is a flat disk.
We note that applying Theorem \ref{thmuniqueness} with $V$ a translation yields
Theorem \ref{thmuniqueness} as a corollary.
Theorem \ref{thmuniqueness} is proved in Section \ref{KillingSection}.

\begin{rmk}
Since one of the conclusions of Theorem \ref{thmuniqueness} (and not one of the
hypotheses) is topological, that is, that $M^n = \D^n$, we are able to use this
theorem in the contrapositive to obtain interesting topological lemmata.
The most general form of this is the following:

\begin{cor}
Suppose $M^n$ is not a disk.
Consider a smooth minimal hypersurface
$F:M^{n}\rightarrow\R^{n+1}$ with free boundary on $\S^{n}\subset \R^{n+1}$.
Then for every Killing field $V:\R^{n+1}\rightarrow\R^{n+1}$,
there exists a point $x\in M$ such that $\nu(x)\perp V(x)$.
\end{cor}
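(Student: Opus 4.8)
The plan is to argue by contraposition, exactly as the surrounding remark advertises; the entire analytic content is already packaged inside Theorem \ref{thmuniqueness}, so the work here is purely logical. Suppose the conclusion fails: there exists a Killing field $V:\R^{n+1}\to\R^{n+1}$ such that $\nu(x)\not\perp V(x)$ at \emph{every} point $x\in M$. Recalling \eqref{EQsV}, this is precisely the statement that the smooth function $s_V=\IP{\nu^M}{V}$ is nowhere zero on $M$. My goal is to upgrade this nonvanishing into the strict positivity hypothesis of Theorem \ref{thmuniqueness}, apply that theorem, and derive a contradiction with the assumption that $M^n$ is not a disk.

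First I would invoke connectedness of $M$. Since $s_V$ is continuous and nowhere vanishing on the connected manifold $M$, the intermediate value theorem forces it to carry a single constant sign: either $s_V>0$ everywhere or $s_V<0$ everywhere. In the first case $F$ is already Killing-graphical with respect to $V$, by definition. In the second case I would exploit the fact that the unit normal $\nu^M$ appearing in the definition of $s_V$ is only determined up to sign; replacing $\nu^M$ by $-\nu^M$ (equivalently, replacing $V$ by the Killing field $-V$) converts $s_V<0$ into $s_V>0$ while preserving all hypotheses. Thus, after at most one orientation swap, $F$ satisfies $s_V>0$ and is Killing-graphical.

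At this stage Theorem \ref{thmuniqueness} applies verbatim and yields $M^n=\D^n$, i.e. $M^n$ \emph{is} a disk. This contradicts the standing hypothesis that $M^n$ is not a disk, which completes the contrapositive and hence establishes the corollary.

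I do not expect a genuine obstacle, since no new estimate is required. The only points needing care are the orientation bookkeeping — justifying that the freedom to choose the unit normal reduces the nowhere-vanishing case to the strictly positive case — and the connectedness of $M$, which guarantees the constant sign. One may also remark in passing that $s_V\neq 0$ everywhere automatically forces $V$ to be nonzero along $M$, so no degeneracy of the Killing field can interfere with applying Theorem \ref{thmuniqueness}.
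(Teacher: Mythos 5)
Your proof is correct and follows essentially the same route the paper intends: the paper obtains this corollary simply by reading Theorem \ref{thmuniqueness} in the contrapositive. Your extra care with connectedness and the normal/Killing-field sign flip (to pass from nowhere-vanishing $s_V$ to strictly positive $s_V$) fills in a detail the paper leaves implicit, and it is genuinely needed, since the bare contrapositive only yields a point where $s_V \le 0$ rather than $s_V = 0$.
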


For example, this implies that on any free boundary minimal surface in the
topological class of the catenoid in $\R^3$, the functions $s_{V_i}$,
$i=1,\ldots,6$, attain at least one zero.
These kinds of results may be useful in understanding questions such as the
Fraser-Schoen conjecture.
\end{rmk}

The main result of Ambrozio-Nunes \cite{ambrozio2016gap} is that if $M^n = \D^n$ and
$|A|^2\IP{F}{\nu^M}^2 \le 2$, then $F(\D^n)$ is flat.
If we allow more freedom in the domain of $F$, the only other possibility is
that at some point $|A|^2\IP{F}{\nu^M}^2 = 2$ and $F(\S\times(a,b))$ is a
critical catenoid.
This result is special to the case of surfaces, but does indicate a kind of
`curvature gap' phenomenon at work.
Our second result moves also in this direction.

\begin{thm}[Curvature gap]
Let $F:\D^n\rightarrow\R^{n+1}$ be a smooth minimal immersed $n$-disk with free
boundary on $\S^n\subset\R^{n+1}$.
Suppose that $M := F(\D^n)$ is not a standard flat disk.
Then
\[
\bigg(\sup_{M} |A|^2\bigg)^2 > n\inf_{\partial M} |A|^2\,.
\]
\label{thmuniqueness2}
\end{thm}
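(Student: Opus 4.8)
The plan is to reduce everything to a single boundary identity for the normal derivative of $|A|^2$ and then feed it into the integrated Simons identity. First I set up an adapted orthonormal frame along $\partial M$: write $\mu = F$ for the outward unit conormal (the free boundary condition forces the conormal to agree with the position field, since $M$ meets $\S^n$ orthogonally), and split $TM|_{\partial M} = T(\partial M)\oplus\R\mu$. Two structural facts are immediate and will be used repeatedly. The support function $s := \IP{F}{\nu^M}$ vanishes on $\partial M$ (because $\nu^M\perp F$ there) and satisfies $\Delta s = -|A|^2 s$ on $M$, so that $M$ is flat precisely when $s\equiv 0$. Moreover $A(\mu,\tau) = 0$ for every $\tau$ tangent to $\partial M$, so $\mu$ is a principal direction; minimality then gives $A(\mu,\mu) = -H^\partial$, where $H^\partial$ and $A^\partial$ denote the mean curvature and second fundamental form of $\partial M$ inside $\S^n$, and consequently $|A|^2 = A(\mu,\mu)^2 + |A^\partial|^2$ along $\partial M$.

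Next I compute $\partial_\mu|A|^2$ on $\partial M$. Using the Codazzi equations for the flat ambient $\R^{n+1}$ together with $\nabla_\mu H = 0$, and exploiting that $\partial M$ is totally umbilic inside $M$ with unit principal curvatures, the tangential derivatives of the mixed components of $A$ are fully determined by the boundary geometry. After substituting and contracting, the calculation collapses to the identity
\[
\partial_\mu|A|^2 = 2\big(n\,A(\mu,\mu)^2 - |A|^2\big) \quad\text{on } \partial M .
\]
This is the step where the constant $n$ enters the argument. As a sanity check it reduces for $n = 2$ to $\partial_\mu|A|^2 = 0$, which recovers the classical surface identity $\int_M|\nabla A|^2 = \int_M|A|^4$ for free boundary minimal surfaces.

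I then integrate Simons' identity $\tfrac12\Delta|A|^2 = |\nabla A|^2 - |A|^4$ over $M$ and substitute the boundary identity, obtaining
\[
\int_{\partial M}\big(n\,A(\mu,\mu)^2 - |A|^2\big) = \int_M\big(|\nabla A|^2 - |A|^4\big) .
\]
On the boundary the Cauchy--Schwarz inequality $A(\mu,\mu)^2 = (H^\partial)^2 \le (n-1)|A^\partial|^2$ converts the curvature terms into a multiple of $|A|^2$, while on the interior the pointwise bound $|A|^2 \le \sup_M|A|^2$ gives $\int_M|A|^4 \le (\sup_M|A|^2)\int_M|A|^2$. Combining these with the estimate $|A|^2 \ge \inf_{\partial M}|A|^2$ on $\partial M$ is designed to isolate the two extremal quantities and produce the asserted comparison between $(\sup_M|A|^2)^2$ and $n\inf_{\partial M}|A|^2$.

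The hard part will be packaging the interior integral $\int_M(|\nabla A|^2 - |A|^4)$ into the squared form on the left-hand side: the naive pointwise bounds give the correct powers, but pinning down the constant and the direction of the inequality is delicate, so this step will require either a sharp lower bound for $\int_M|\nabla A|^2$ (for instance via an improved Kato inequality) or a localisation of the estimate at a boundary point realising $\inf_{\partial M}|A|^2$, where the identity $\partial_\mu|A|^2 = 2(n\,A(\mu,\mu)^2 - |A|^2)$ can be combined with the sign of the normal derivative. Finally, strictness is where the non-flatness hypothesis is spent: if the inequality degenerated to an equality, every application of Cauchy--Schwarz above would have to be saturated, forcing $\partial M$ to be umbilic in $\S^n$ and $\nabla A \equiv 0$; a parallel second fundamental form together with the free boundary condition then forces $M$ to be a flat disk, equivalently $s\equiv 0$ through $\Delta s = -|A|^2 s$, contradicting the hypothesis. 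The strong maximum principle enters at this stage to upgrade the rigidity into the strict inequality claimed.
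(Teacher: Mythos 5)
There is a genuine gap, and it occurs precisely at the step you call the place ``where the constant $n$ enters the argument'': your boundary identity has the wrong sign on the $A(\mu,\mu)^2$ term. With $\mu = F$ the outward conormal, Stahl's boundary relations for a free boundary on the unit sphere give, in a diagonalising frame, $\nabla_n h_{ii} = h_{nn} - h_{ii}$ for $i<n$, and minimality then forces $\nabla_n h_{nn} = -\sum_{i<n}(h_{nn}-h_{ii}) = -n\,h_{nn}$. Contracting,
\begin{equation*}
\nabla_\mu |A|^2 \;=\; -2|A|^2 \;-\; 2n\,A(\mu,\mu)^2 \qquad\text{on } \partial M,
\end{equation*}
whereas you assert $\nabla_\mu|A|^2 = +2n\,A(\mu,\mu)^2 - 2|A|^2$. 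The sign is not cosmetic: with the correct identity the boundary term in the integrated Simons equation is favorable, $\int_M\Delta|A|^2 \le -2\int_{\partial M}|A|^2$, while with your sign it can be positive (bounded only by $2(n-1)\int_{\partial M}|A|^2$ after Cauchy--Schwarz), and the comparison you are aiming for cannot close. Your own sanity check should have caught this: for $n=2$ the correct value is $\nabla_\mu|A|^2 = -4|A|^2$, not $0$, so the ``classical identity'' $\int_M|\nabla A|^2 = \int_M|A|^4$ is false for free boundary minimal surfaces (the critical catenoid satisfies $\int_M|\nabla A|^2 = \int_M|A|^4 - 2\int_{\partial M}|A|^2$, with a strictly negative boundary contribution).

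The second problem is that the dimensional constant $n$ does not come from the boundary identity at all, but from an ingredient absent from your proposal: the isoperimetric equality $n|M| = |\partial M|$ (Federer; Proposition \ref{isoequality}). The argument runs by contradiction: assuming $(\sup_M|A|^2)^2 \le n\inf_{\partial M}|A|^2$, the corrected boundary identity and Simons' equation give
\begin{equation*}
\int_M |\nabla A|^2 \,d\mu \;\le\; \int_M|A|^4\,d\mu - \int_{\partial M}|A|^2\,dS \;\le\; |M|\Big(\sup_M|A|^2\Big)^2 - |\partial M|\inf_{\partial M}|A|^2 \;=\; |M|\Big[\Big(\sup_M|A|^2\Big)^2 - n\inf_{\partial M}|A|^2\Big] \;\le\; 0,
\end{equation*}
forcing $\nabla A \equiv 0$, hence $|A|^2$ constant, hence $|A|^2\equiv 0$ by the boundary identity, so $M$ is an equatorial disk --- contradicting non-flatness. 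This closes exactly the step you leave open (``the hard part will be packaging the interior integral\dots''): no improved Kato inequality or localisation is needed, only the area--boundary relation $n|M| = |\partial M|$ together with the sign-corrected normal derivative. As written, your proposal is incomplete even granting its identities, and its central identity is false.
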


\begin{rmk}
One has automatically that $\displaystyle \sup_{M} |A|^2 \ge \inf_{\partial M}
|A|^2$, and so our estimate above gives new information only when
$\displaystyle \inf_{\partial M} |A|^2 \in (0,4]$ for $n=2$ and $\in (0,2]$ for $n>2$.
We are not restricted here by dimension but we do require that the minimal
hypersurface is topologically a disk. This restriction is made clear in the
proof.
\end{rmk}

Finally, we wish to note that minimal hypersurfaces with free boundary on
$\S^n$ achieve equality in the isoperimetric inequality. The general result,
completed by Federer in \cite{federer2014geometric}, says that the area of an
$n$ dimensional minimal surface (any dimension) inside the unit sphere equals
$1/n$ times the integral over the boundary of the cosine of the angle it makes
with the radial direction. We would like to thank Professor Frank Morgan for
helping us locate the most general result of this kind.
Similar results have been obtained in Hanes \cite{hanes1972isoperimetric} and Brendle \cite{brendle2012}.

\begin{prop}[Isoperimetric equality, special case of Proposition 5.4.3(i) in \cite{federer2014geometric}]
Let $F:M^n\rightarrow\R^{n+1}$ be a smooth minimal immersed hypersurface with free
boundary on $\S^n\subset\R^{n+1}$.
Then
\begin{align*}
n|M| = |\partial M|,
\end{align*}
where $|M|$ and $|\partial M|$ are the volume of the hypersurface and the
volume of its boundary respectively.
\label{isoequality}
\end{prop}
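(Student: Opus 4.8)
The plan is to apply the divergence theorem to the tangential part of the position vector field $F$, using minimality to evaluate the interior term and the free boundary condition to evaluate the boundary term.

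First I would compute the tangential divergence of the position field along $M$. Writing the decomposition $F = F^\top + \IP{F}{\nu^M}\,\nu^M$ and using the elementary fact that the ambient derivative of the position vector in a tangent direction $e_i$ is simply $e_i$, a short computation with an orthonormal frame $\{e_i\}_{i=1}^n$ for $TM$ (and the Weingarten relation $(\bar\nabla_{e_i}\nu^M)^\top = -\sum_j h_{ij}e_j$) gives
\begin{equation*}
\Div_M F^\top = n + H\,\IP{F}{\nu^M},
\end{equation*}
where $H = \sum_i h_{ii}$ is the scalar mean curvature. Since $F$ is minimal, $H \equiv 0$, and this collapses to the clean identity $\Div_M F^\top = n$.

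Next I would integrate over $M$ and invoke the divergence theorem,
\begin{equation*}
n|M| = \int_M \Div_M F^\top \, d\mu = \int_{\partial M} \IP{F^\top}{\eta}\, d\sigma,
\end{equation*}
where $\eta$ denotes the outward unit conormal of $\partial M$ in $M$. It remains to evaluate the boundary integrand, and this is where the free boundary hypothesis enters. Because $M$ meets $\S^n$ orthogonally along $\partial M$, the conormal $\eta$ coincides with the outward unit normal to $\S^n$, which at a point $p \in \S^n$ is exactly the position vector $p$; hence $\eta = F$ along $\partial M$. In particular $F$ is tangent to $M$ there, so $\IP{F}{\nu^M} = 0$, whence $F^\top = F$ and $\IP{F^\top}{\eta} = |F|^2 = 1$ since $\partial M \subset \S^n$ is the unit sphere. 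The boundary integral therefore reduces to $|\partial M|$, and comparing the two displays yields $n|M| = |\partial M|$.

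The computation is essentially routine, and I do not anticipate a serious obstacle; the one step demanding genuine care is the correct interpretation of the free boundary condition used to identify $\eta = F$ on $\partial M$, together with maintaining consistent sign conventions for the second fundamental form so that the $H\,\IP{F}{\nu^M}$ term in the divergence identity is seen to vanish under minimality.
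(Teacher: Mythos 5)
Your proposal is correct and takes essentially the same route as the paper: both arguments apply the divergence theorem to the position vector, use minimality to eliminate the mean curvature term, and use the free boundary condition to identify the outward conormal with $F$ itself so the boundary integrand equals $1$. Indeed, since $F^\top = \tfrac{1}{2}\nabla|F|^2$, integrating your identity $\Div_M F^\top = n + H\IP{F}{\nu^M}$ is exactly the paper's integration by parts of $\int_M \IP{\Delta F}{F}\,d\mu$ starting from $\Delta F = -H\nu^M$.
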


\section{Setting}

Consider the standard unit sphere in Euclidean space $\S^n = \{x\in\R^{n+1}\,:\,|x|=1\}\subset
\R^{n+1}$.
We use $\nu^{\S^n}:\S^{n}\rightarrow\R^{n+1}$ to denote its outer
normal vectorfield.
Let $M^n$ be a smooth, orientable $n$-dimensional Hausdorff paracompact manifold with boundary $\partial M^n$.
Let $g$ be a Riemannian metric on $M^n$.
Set $M:=F(M^n)\subset{\R}^{n+1}$ where $F:M^n\rightarrow {\R}^{n+1}$ is a
smooth isometric immersion satisfying
\begin{align}
&{\partial} M \ \equiv \ F({\partial} M^n)\ =\ M \cap \S^n,\notag\\
&\IP{{\nu}^M}{{\nu}^{\S^n}}(F(p))\ =\ 0, \text{ }\forall
p~\in{\partial} M^n.
\label{initial}
\end{align}
Since $F$ is isometric, the Riemannian structure induced by the embedding $F$
is the same as that given by $g$, that is $(M^n, g) = (M^n, F^*\delta)$, where
$\delta$ is the standard metric on $\R^n$.

Let us denote by $A:T M\times T M \rightarrow \R$ the second fundamental form
of $M$ with components given by $h_{ij}$ where $1\leq i,j\leq n$ where $h_{ij}=
A(\tau_i,\tau_j)$ for two sections $\tau_i$ and $\tau_j$ in $TM$.
For $\S^n$ we have $A^{\S^n}:T \S^n \times T\S^n\rightarrow \R$ the
second fundamental form with components $h^{\S^n}_{ij}$ for $1\leq i,j\leq
n$.

\section{Auxiliary Equations}

Let us define the quantities we use in the proof of the uniqueness theorem.
Recall the function $s_V:M\rightarrow\R$ defined in \eqref{EQsV} above.
When $V$ is a translation, following \cite{ecker1989mce} we term $s_V$ the \emph{graph quantity}.
If, up to reparametrisation,
\[
F(p) = (p,u(p)) = p_ie_i + u(p)V\,,
\]
then one can relate the gradient of the associated scalar function $u$ to the
reciprocal of the graph quantity $s_V$. This implies that a lower bound on $s$
is equivalent to a gradient bound for $u$.
Throughout this section we assume that $F:M^n\rightarrow\R^{n+1}$ is a smooth
minimal immersed hypersurface.

\begin{lem} The quantity $s_V:M\rightarrow\R$
satisfies
\begin{align*}
{\Delta}^M s_V = - |A|^2 s_V\,.
\end{align*}
where $\Delta^M$ is the Laplace-Beltrami operator on $M$.
\label{sevolution}
\end{lem}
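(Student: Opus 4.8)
The plan is to compute $\Delta^M s_V$ directly in a well-chosen local frame, reducing everything to minimality and to the affine structure of Euclidean Killing fields. I would fix a point $p\in M$ and a geodesic orthonormal frame $\{\tau_i\}$ on $M$ with $\nabla^M_{\tau_i}\tau_j = 0$ at $p$, so that at $p$ the operator $\Delta^M$ reduces to $\sum_i \tau_i\tau_i$ acting on scalars as well as on the $\R^{n+1}$-valued maps $F$ and $\nu^M$. With $D$ the flat ambient connection, the Gauss and Weingarten relations read $D_{\tau_i}\tau_j = h_{ij}\nu^M$ and $D_{\tau_i}\nu^M = -h_{ik}\tau_k$ at $p$. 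Differentiating $s_V = \IP{\nu^M}{V}$ once gives $\tau_i s_V = \IP{D_{\tau_i}\nu^M}{V} + \IP{\nu^M}{D_{\tau_i}V} = -h_{ik}\IP{\tau_k}{V} + \IP{\nu^M}{D_{\tau_i}V}$, which sets up the second differentiation.

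Differentiating a second time and summing over $i$ produces four groups of terms, three of which I would show vanish. A Euclidean Killing field has the affine form $V(x) = Bx + b$ with $B$ skew-symmetric, so $D_{\tau_i}V = B\tau_i$ is linear and its ambient Hessian vanishes; this keeps the expansion finite and, crucially, forces $\IP{\tau_k}{D_{\tau_i}V}$ to be antisymmetric in $i,k$. Contracting this against the symmetric $h_{ik}$ annihilates the cross terms where one derivative lands on $\nu^M$ and one on $V$. Minimality then enters in two further terms: $\Delta^M F = \vec H = 0$ removes the term in which both derivatives fall on $F$, and Codazzi together with $H=0$ gives $\nabla^M_i h_{ik} = \nabla^M_k H = 0$.

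The heart of the argument is the Laplacian of the Gauss map. Combining Weingarten, Gauss, and Codazzi as above, I would establish $\Delta^M\nu^M = -|A|^2\nu^M$ for a minimal hypersurface: the tangential contribution $(\nabla^M_i h_{ik})\tau_k$ drops out by Codazzi and minimality, leaving only the normal part $-\sum_{i,k}h_{ik}^2\,\nu^M$. Feeding this into the one surviving term yields $\Delta^M s_V = \IP{\Delta^M\nu^M}{V} = \IP{-|A|^2\nu^M}{V} = -|A|^2 s_V$, as claimed.

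The only genuine content is the Gauss-map identity $\Delta^M\nu^M = -|A|^2\nu^M$; the cancellation of the cross terms and the appeal to $\Delta^M F = 0$ are routine once the geodesic frame is fixed. The conceptual shadow of the whole computation is worth recording: the isometries generated by $V$ preserve the vanishing mean curvature, so $s_V$ must be a Jacobi field, satisfying $\Delta^M s_V + \big(|A|^2 + \mathrm{Ric}(\nu^M,\nu^M)\big) s_V = 0$, and since $\mathrm{Ric}\equiv 0$ in $\R^{n+1}$ this collapses to the stated identity. The direct calculation above is simply an explicit unwinding of this principle.
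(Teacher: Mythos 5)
Your proof is correct. The paper itself offers no proof of this lemma (it is stated as a standard fact, in the spirit of the Ecker--Huisken graph-quantity computations cited alongside it), and your argument is precisely the standard one that fills this gap: in a geodesic orthonormal frame the cross terms $\sum_i \IP{D_{\tau_i}\nu^M}{B\tau_i} = -h_{ik}\IP{\tau_k}{B\tau_i}$ die by pairing the symmetric $h_{ik}$ against the skew matrix $B$, the term $\IP{\nu^M}{B\,\Delta^M F}$ dies by minimality, and the surviving term is handled by the Gauss-map identity $\Delta^M \nu^M = -\nabla^M H - |A|^2\nu^M = -|A|^2\nu^M$, which uses Codazzi and $H=0$ exactly as you say. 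Your closing remark identifying $s_V$ as a Jacobi field (with ambient Ricci zero) is also the right conceptual reading and is consistent with the way the identity is used later in the paper.
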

The squared reciprocal of $s_V$ denoted by $v_V^2=\frac{1}{s_V^2}$ also satisfies an
elliptic equation (see \cite{ecker1989mce}):
\begin{lem} The quantity $v_V^2:M\rightarrow\R$ satisfies
\begin{align*}
{\Delta}^M v_V^2 = 2|A|^2 v_V^2 + 6|\nabla v_V|^2\,.
\end{align*}
\label{v2evolution}
\end{lem}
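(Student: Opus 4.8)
The plan is to derive the equation for $v_V^2$ purely algebraically from Lemma \ref{sevolution}, treating $v_V^2 = s_V^{-2}$ as the composition of $s_V$ with the scalar function $f(t) = t^{-2}$. This is legitimate at every point where $s_V \neq 0$, which is all that is required since $v_V^2$ is only defined there; no further geometric input beyond Lemma \ref{sevolution} should be needed.

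First I would record the chain rule for the Laplace--Beltrami operator applied to a composition: for smooth $f:\R\to\R$ and the scalar function $s_V$ on $M$,
\[
\Delta^M f(s_V) = f'(s_V)\,\Delta^M s_V + f''(s_V)\,|\nabla s_V|^2\,.
\]
Taking $f(t) = t^{-2}$, so that $f'(t) = -2t^{-3}$ and $f''(t) = 6t^{-4}$, this reads
\[
\Delta^M v_V^2 = -2 s_V^{-3}\,\Delta^M s_V + 6 s_V^{-4}\,|\nabla s_V|^2\,.
\]
Next I would substitute the conclusion of Lemma \ref{sevolution}, namely $\Delta^M s_V = -|A|^2 s_V$, into the first term on the right. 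The extra factor of $s_V$ cancels one power of $s_V^{-3}$ and produces $2|A|^2 s_V^{-2} = 2|A|^2 v_V^2$, which is exactly the first term claimed.

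Finally I would rewrite the gradient term in terms of $v_V$ rather than $s_V$. Differentiating $v_V^2 = s_V^{-2}$ gives $\nabla(v_V^2) = -2 s_V^{-3}\nabla s_V$; comparing this with $\nabla(v_V^2) = 2 v_V\nabla v_V$, taking norms, and using $v_V^2 = s_V^{-2}$ yields $|\nabla v_V|^2 = s_V^{-4}|\nabla s_V|^2$, so that $6 s_V^{-4}|\nabla s_V|^2 = 6|\nabla v_V|^2$. Assembling the two terms gives the stated identity. There is no genuine obstacle here: the only points to watch are the combinatorial coefficients arising from the second derivative of $t^{-2}$ and the correct identification of $|\nabla v_V|^2$ with $s_V^{-4}|\nabla s_V|^2$ (the sign of $s_V$ does not survive the squaring). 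All of the geometric content is already carried by Lemma \ref{sevolution}, and this lemma merely repackages it for the reciprocal quantity, which is the form convenient for the integral estimates to follow.
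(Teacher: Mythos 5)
Your proposal is correct: the chain rule $\Delta^M f(s_V) = f'(s_V)\Delta^M s_V + f''(s_V)|\nabla s_V|^2$ with $f(t)=t^{-2}$, combined with $\Delta^M s_V = -|A|^2 s_V$ and the identity $|\nabla v_V|^2 = s_V^{-4}|\nabla s_V|^2$, gives exactly the stated equation. The paper itself supplies no proof for this lemma (it simply cites Ecker), and your derivation is precisely the standard computation behind that reference, transposed from the parabolic to the elliptic setting, so it fills the omission with essentially the intended argument.
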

The support function is defined as $u:M\rightarrow\R$:
\begin{align*}
u(x) :&= \IP{x}{\nu^M(x)}\,.
\end{align*}
It is strictly positive for convex bodies (that contain the origin) and vanishes on linear subspaces of $\R^{n+1}$.
On a minimal hypersurface, it satisfies the following equation:
\begin{lem}The quantity $u^2:M\rightarrow\R$ satisfies
\begin{align*}
{\Delta}^M u^2 = - 2|A|^2u^2 + 2|\nabla u|^2\,.
\end{align*}
\label{g2evolution}
\end{lem}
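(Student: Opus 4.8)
The plan is to reduce the claimed second-order identity for $u^2$ to the cleaner first-order identity $\Delta^M u = -|A|^2 u$, which directly parallels Lemma \ref{sevolution} for $s_V$. Once that is in hand, I would simply expand the scalar product rule
\[
\Delta^M u^2 = 2u\,\Delta^M u + 2|\nabla u|^2
\]
and substitute, obtaining $\Delta^M u^2 = -2|A|^2 u^2 + 2|\nabla u|^2$ at once. So the entire content of the lemma lies in proving $\Delta^M u = -|A|^2 u$.

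To establish that, I would treat $u = \IP{F}{\nu^M}$ as the inner product of two $\R^{n+1}$-valued functions on $M$ and apply the vector form of the product rule,
\[
\Delta^M\IP{F}{\nu^M} = \IP{\Delta^M F}{\nu^M} + 2\,g^{ij}\IP{\nabla_i F}{\nabla_j \nu^M} + \IP{F}{\Delta^M \nu^M}\,,
\]
then handle each term via the Gauss and Weingarten relations together with minimality. Writing $\tau_i = \partial_i F$ for the coordinate tangent fields, the Gauss relation gives $\Delta^M F = H\nu^M$, which vanishes because $H\equiv 0$; and the Weingarten relation $\nabla_i\nu^M = -h_i^{\,k}\tau_k$ makes the cross term equal to $-g^{ij}h_j^{\,k}g_{ik} = -H = 0$. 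Hence only the final term survives, leaving $\Delta^M u = \IP{F}{\Delta^M\nu^M}$.

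It then remains to compute $\Delta^M\nu^M$, and this is the step I expect to require the most care. Differentiating the Weingarten relation a second time and tracing produces
\[
\Delta^M \nu^M = -\big(\nabla^k H\big)\tau_k - |A|^2\nu^M\,,
\]
where the tangential term comes from $g^{ij}\nabla_j h_i^{\,k}$, which equals $\nabla^k H$ by the Codazzi equation, and the normal term is $g^{ij}h_i^{\,k}h_{jk} = |A|^2$. Minimality forces $H\equiv 0$ and hence $\nabla H\equiv 0$, so the tangential piece drops out and $\Delta^M\nu^M = -|A|^2\nu^M$. Therefore $\Delta^M u = \IP{F}{-|A|^2\nu^M} = -|A|^2 u$, and feeding this into the product-rule expansion above completes the proof. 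The only genuine subtleties are bookkeeping: fixing a sign convention for $h_{ij}$ (the final identity is invariant under $\nu^M\to-\nu^M$, so the choice is immaterial) and correctly invoking Codazzi to discard the $\nabla H$ term.
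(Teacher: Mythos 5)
Your proof is correct: the reduction to $\Delta^M u = -|A|^2 u$ via the harmonicity-type identity $\Delta^M \nu^M = -\big(\nabla^k H\big)\tau_k - |A|^2\nu^M$ together with minimality is the standard argument, and since the paper states this lemma without proof (as one of several standard auxiliary identities, citing the literature), your computation simply supplies the canonical missing details rather than diverging from any approach in the paper. The only blemish is cosmetic: the cross term in your product-rule expansion is $2\,g^{ij}\IP{\nabla_i F}{\nabla_j \nu^M} = -2H$ rather than $-H$, but as $H\equiv 0$ this has no effect on the argument.
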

We also require the following evolution equation for the product of $v_V^2$ and $u^2$.
\begin{lem}
The quantity $Q=u^2v_V^2:M\rightarrow\R$ satisfies
\begin{align*}
{\Delta}^M Q \geq 2\frac{\nabla v_V}{v_V}\nabla Q .
\end{align*}
\label{g2v2evolution}
\end{lem}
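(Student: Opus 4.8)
The plan is to compute $\Delta^M Q$ directly by the product rule, substitute the two preceding evolution equations, and then recognise the remaining first-order terms as a perfect square. Writing $Q = u^2\,v_V^2$ and applying the product rule for the Laplace--Beltrami operator gives
\[
\Delta^M Q = u^2\,\Delta^M v_V^2 + v_V^2\,\Delta^M u^2 + 2\IP{\nabla u^2}{\nabla v_V^2}\,.
\]
First I would insert $\Delta^M v_V^2 = 2|A|^2 v_V^2 + 6\vn{\nabla v_V}^2$ from Lemma \ref{v2evolution} and $\Delta^M u^2 = -2|A|^2 u^2 + 2\vn{\nabla u}^2$ from Lemma \ref{g2evolution}. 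The key structural observation is that the two curvature contributions $2|A|^2 u^2 v_V^2$ and $-2|A|^2 u^2 v_V^2$ cancel exactly, so that $\Delta^M Q$ depends only on first derivatives of $u$ and $v_V$.

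Next I would evaluate every term explicitly using $\nabla u^2 = 2u\,\nabla u$ and $\nabla v_V^2 = 2v_V\,\nabla v_V$, which yields
\[
\Delta^M Q = 2v_V^2\vn{\nabla u}^2 + 6u^2\vn{\nabla v_V}^2 + 8u\,v_V\IP{\nabla u}{\nabla v_V}\,.
\]
On the other side, since $\nabla Q = 2u\,v_V^2\,\nabla u + 2u^2 v_V\,\nabla v_V$, the claimed drift term becomes
\[
\frac{2}{v_V}\IP{\nabla v_V}{\nabla Q} = 4u\,v_V\IP{\nabla u}{\nabla v_V} + 4u^2\vn{\nabla v_V}^2\,.
\]
Subtracting, the indefinite cross terms partially cancel and I would be left with
\[
\Delta^M Q - \frac{2}{v_V}\IP{\nabla v_V}{\nabla Q} = 2v_V^2\vn{\nabla u}^2 + 2u^2\vn{\nabla v_V}^2 + 4u\,v_V\IP{\nabla u}{\nabla v_V} = 2\vn{v_V\,\nabla u + u\,\nabla v_V}^2\,,
\]
which is manifestly nonnegative and establishes the inequality. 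Equivalently, since $Q = (u v_V)^2$, the residual is exactly $2\vn{\nabla(u v_V)}^2$.

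The computation is short, so the only real obstacle is bookkeeping: one must verify that the coefficients of the three quadratic terms conspire to form an exact square rather than an indefinite quadratic form that would require Cauchy--Schwarz and incur a loss. The decisive points are the cancellation of the $|A|^2$ terms, which relies on the opposite signs in Lemmas \ref{v2evolution} and \ref{g2evolution}, and the fact that the leftover expression is precisely twice $\vn{v_V\,\nabla u + u\,\nabla v_V}^2$; I would carefully check the factor $6$ in the $v_V^2$ equation against the factor $2$ in the $u^2$ equation, since it is exactly this balance of constants that makes the square close up.
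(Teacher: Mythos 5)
Your proposal is correct and follows essentially the same route as the paper: product rule for $\Delta^M Q$, substitution of Lemmata \ref{v2evolution} and \ref{g2evolution} with cancellation of the $|A|^2$ terms, and extraction of the drift term $2\frac{\nabla v_V}{v_V}\nabla Q$. The only (cosmetic) difference is that you identify the residual explicitly as the perfect square $2|\nabla(u v_V)|^2$, whereas the paper leaves the nonnegativity of $2v_V^2|\nabla u|^2 + 2u^2|\nabla v_V|^2 + 4uv_V\nabla v_V\nabla u$ implicit.
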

\begin{proof}
We compute using Lemmata \ref{v2evolution} and \ref{g2evolution}
\begin{align*}
{\Delta}^M Q &= v_V^2 {\Delta}^M u^2 + u^2{\Delta}^M v_V^2 + 2\nabla v_V^2 \nabla u^2
	\\
 &= 2 v_V^2 |\nabla u|^2 + 6u^2|\nabla v_V|^2 + 2\nabla v_V^2 \nabla u^2
	\,.
\end{align*}
Separately we can transform the mixed gradient term into a gradient of the $Q$
quantity and extra terms as follows:
\begin{align*}
2\nabla v_V^2 \nabla u^2 &= \nabla v_V^2 \nabla u^2 + 4uv_V\nabla v_V \nabla u
	= 2\frac{\nabla v_V}{v_V}\nabla Q - 4|\nabla v_V|^2 u^2 + 4uv_V\nabla v_V \nabla u\,.
\end{align*}
Replacing into the above completes the proof:
\begin{align*}
{\Delta}^M Q &= 2\frac{\nabla v_V}{v_V}\nabla Q + 2v_V^2|\nabla u|^2 +
	6u^2|\nabla v_V|^2 - 4|\nabla v_V|^2 u^2 + 4uv_V\nabla v_V
	\nabla u
	\\
 &= 2\frac{\nabla v_V}{v_V}\nabla Q + 2v_V^2|\nabla u|^2 + 2u^2|\nabla v_V|^2 + 4uv_V\nabla v_V \nabla u
	\\
 &\geq 2\frac{\nabla v_V}{v_V}\nabla Q\,.
\end{align*}
\end{proof}

The following calculation is standard.

\begin{lem}[Simon's Equation] The square of the second fundamental form satisfies the equation
\begin{align*}
\frac{1}{2}{\Delta}^{M}|A|^2 = |\nabla A|^2 - |A|^4\,.
\end{align*}
\label{Aevolution}
\end{lem}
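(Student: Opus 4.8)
The plan is to establish the pointwise Simons identity for the components $h_{ij}$ of the second fundamental form and then trace it against $h^{ij}$. Since $F$ is an isometric immersion into flat $\R^{n+1}$, I would work in a local frame and record the three structural equations governing $h_{ij}$: the Codazzi equations $\nabla_k h_{ij} = \nabla_i h_{kj}$, which are fully symmetric in all three indices because the ambient curvature vanishes; the Gauss equation $R_{ijkl} = h_{ik}h_{jl} - h_{il}h_{jk}$, expressing the intrinsic Riemann curvature of $M$ through $A$; and the traced Codazzi identity $\nabla^k h_{kj} = \nabla_j H$, which will eliminate a whole block of terms once minimality is imposed.

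First I would compute ${\Delta}^M h_{ij} = g^{kl}\nabla_k\nabla_l h_{ij}$ by applying Codazzi to rewrite $\nabla_l h_{ij}$ as $\nabla_i h_{lj}$, and then using the Ricci identity to commute $\nabla_k$ past $\nabla_i$. This commutation introduces two Riemann curvature terms; a second application of Codazzi followed by tracing with $g^{kl}$ converts the surviving leading derivative term into $\nabla_i\nabla_j H$. Substituting the Gauss equation into the curvature terms and contracting, the cubic contributions $(h^3)_{ij}$ cancel in pairs, leaving
\[
{\Delta}^M h_{ij} = \nabla_i\nabla_j H + H\,(h^2)_{ij} - |A|^2 h_{ij}\,,
\]
where $(h^2)_{ij} = h_{ik}h^k{}_j$. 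Because $F$ is minimal, $H \equiv 0$, so both the Hessian of $H$ and the term $H\,(h^2)_{ij}$ vanish identically, and one is left with ${\Delta}^M h_{ij} = -|A|^2 h_{ij}$.

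Finally I would contract against $h^{ij}$, using the identity $\tfrac12{\Delta}^M|A|^2 = h^{ij}{\Delta}^M h_{ij} + |\nabla A|^2$. The first term becomes $-|A|^2 h^{ij}h_{ij} = -|A|^4$, yielding exactly $\tfrac12{\Delta}^M|A|^2 = |\nabla A|^2 - |A|^4$. The hard part will be the bookkeeping in the second step: the order of the two Codazzi substitutions relative to the curvature commutation must be tracked carefully, and the sign conventions in the Ricci identity together with the raising and lowering of indices against the Gauss tensor are where arithmetic slips are most likely to occur. Everything else is routine, consistent with the remark that this calculation is standard.
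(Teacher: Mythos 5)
Your derivation is correct: the Simons identity $\Delta^M h_{ij} = \nabla_i\nabla_j H + H(h^2)_{ij} - |A|^2 h_{ij}$, specialised via $H\equiv 0$ and traced against $h^{ij}$ with the product rule $\tfrac12\Delta^M|A|^2 = h^{ij}\Delta^M h_{ij} + |\nabla A|^2$, gives exactly the stated equation, and the cancellation of the cubic terms $(h^3)_{ij}$ works as you describe. The paper offers no proof of this lemma, dismissing it as ``standard''; the calculation you outline --- Codazzi symmetry in the flat ambient space, the Ricci commutation, the Gauss equation, then tracing --- is precisely the standard argument being invoked, so your proposal fills the gap exactly as intended.
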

We also require the boundary derivative of the second fundamental form.
For any boundary point $x\in \partial M$ the Neumann boundary condition allows
us to chose a basis  $\{{\tau}_1,\ldots,{\tau}_{n}\}$ of the tangent space $T_x
M$ such that ${\tau}_i\in T\partial M \cap T\S^n $ for all
$i\in\{1,\ldots,n-1\}$ and ${\tau}_n={\nu}^{\S^n}$ at $x$.

Note that for any choice of orthonormal basis of the tangent space of $\S^n$
the second fundamental form of $\S^n$ is diagonal.
It was shown by Stahl \cite{stahl1996convergence} that on the boundary we have
\begin{align*}
h_{in}=h^{\S^n}_{in}=0\quad \text{ for all }\quad i=1,\ldots,n-1.
\end{align*}
Since $n-1$ of the tangent vectors are on the closed submanifold $\partial M\subset\S^n$ we can choose
our basis above such that on the boundary
\begin{align*}
h_{ij}=0\quad  \text{ if } \quad i\neq j\,,\quad \text{ and }\quad h^{\S^n}_{ij}=\delta_{ij}\quad\text{ if }\quad i\neq j.
\end{align*}
We also need the following boundary relations, again due to Stahl \cite[Theorem 2.4]{stahl1996convergence}.

For the remainder of this section we additionally assume that $F$ has free boundary on
$\S^n\subset\R^{n+1}$.

\begin{lem}[Normal derivatives]
Along the boundary in an orthonormal basis as described above the following hold:
\begin{align*}
\nabla_n H &= H\,,
\\
\nabla_n h_{ii} &= (h_{nn}-h_{ii})\quad\text{ for all }\quad i=1,\ldots,n-1.
\end{align*}
\label{boundary_derivatives}
\end{lem}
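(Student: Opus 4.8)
The plan is to work entirely on $\partial M$ in the adapted orthonormal frame $\{\tau_1,\dots,\tau_{n-1},\tau_n=\nu^{\S^n}\}$ fixed above, in which $\nu^M$ is tangent to $\S^n$, $h$ is diagonal, and $h_{in}=0$ for $i<n$. The only external geometric input is that $\S^n$ is the \emph{unit} sphere, so its outward normal is the position field, $\nu^{\S^n}=x$, and hence the ambient (Euclidean) derivative $D$ satisfies $D_{\tau_i}\nu^{\S^n}=\tau_i$. Projecting this identity onto $TM$ (and using $\tau_i\in T\partial M\subset TM$) yields the two facts that drive everything: first, the $M$-covariant derivative of the conormal is $\nabla_{\tau_i}\tau_n=\tau_i$ for $i<n$ (its $\nu^M$-component is $h(\tau_i,\tau_n)=0$, consistent with $h_{in}=0$); and second, $\IP{\nabla_{\tau_i}\tau_i}{\tau_n}=\IP{D_{\tau_i}\tau_i}{\nu^{\S^n}}=-\IP{\tau_i}{D_{\tau_i}\nu^{\S^n}}=-1$, which is precisely the umbilicity $h^{\S^n}(\tau_i,\tau_i)=1$.

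For the second identity I would invoke the Codazzi equation, which in the flat ambient space makes $\nabla h$ totally symmetric, so that for $i<n$ the troublesome conormal derivative trades for a tangential one: $\nabla_n h_{ii}=(\nabla_{\tau_n}h)(\tau_i,\tau_i)=(\nabla_{\tau_i}h)(\tau_i,\tau_n)=\nabla_i h_{in}$. Expanding in the frame,
\[
\nabla_i h_{in}=\tau_i\big(h(\tau_i,\tau_n)\big)-h(\nabla_{\tau_i}\tau_i,\tau_n)-h(\tau_i,\nabla_{\tau_i}\tau_n),
\]
I evaluate each term on $\partial M$. The first vanishes since $h_{in}\equiv 0$ along $\partial M$ and $\tau_i$ is tangent to $\partial M$. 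Because $h$ is diagonal, the second collapses to $\IP{\nabla_{\tau_i}\tau_i}{\tau_n}\,h_{nn}=-h_{nn}$ by the umbilicity fact, and the third is $h(\tau_i,\tau_i)=h_{ii}$ by $\nabla_{\tau_i}\tau_n=\tau_i$. Collecting signs gives $\nabla_n h_{ii}=h_{nn}-h_{ii}$, as claimed.

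The first identity is where the genuine subtlety lies, and I expect it to be the main obstacle. Summing the second identity over $i<n$ gives $\nabla_n H=(n-1)h_{nn}-\sum_{i<n}h_{ii}+\nabla_n h_{nn}$, isolating the entire difficulty in the purely transverse term $\nabla_n h_{nn}=(\nabla_{\tau_n}h)(\tau_n,\tau_n)$. This term resists the method above: with all indices equal it cannot be converted to a tangential derivative by Codazzi, and it is simply not determined by the tangential boundary data — a totally umbilic spherical cap meeting $\S^n$ orthogonally has $\nabla_n h_{nn}=0$, whereas $\nabla_n H=H$ would force $\nabla_n h_{nn}=2H-nh_{nn}\neq 0$. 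The resolution is to invoke the standing hypothesis that $F$ is minimal: then $H\equiv 0$ on all of $M$, so $\nabla_n H=0=H$ at once, and back-substitution even pins down $\nabla_n h_{nn}=-n\,h_{nn}$. Without minimality the statement is exactly Stahl's Neumann boundary condition for the mean curvature \cite{stahl1996convergence}, which supplies the otherwise missing value of the transverse derivative.
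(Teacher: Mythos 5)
Your proof is correct, but it takes a genuinely different route from the paper, which offers no derivation at all: there the lemma is simply quoted from Stahl \cite[Theorem 2.4]{stahl1996convergence}. Your derivation of the second identity is the substantive part and it checks out: Codazzi symmetry of $\nabla h$ in the flat ambient space converts $\nabla_n h_{ii}$ into $(\nabla_{\tau_i}h)(\tau_i,\tau_n)$, and the connection coefficients $\nabla_{\tau_i}\tau_n=\tau_i$ and $\IP{\nabla_{\tau_i}\tau_i}{\tau_n}=-1$ follow from the fact that $\nu^{\S^n}$ is the position field on the unit sphere; the only boundary inputs are $h_{in}=0$ along $\partial M$ and pointwise diagonality, which the paper records (also from Stahl) before the lemma. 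Notably, your argument shows the second identity needs no minimality at all: it holds for any hypersurface meeting $\S^n$ orthogonally. Your handling of the first identity is also correct, and it clarifies something the paper's bare citation obscures: $\nabla_n H=H$ is \emph{not} a pointwise consequence of orthogonal contact (your umbilic-cap counterexample is right: there $\nabla_n H=0\neq H$), and Stahl proves it for mean curvature flow by time-differentiating the preserved Neumann condition, so the flow supplies the transverse information that the boundary geometry alone cannot; in the static minimal setting of this paper it is true only in the degenerate sense $0=0$ because $H\equiv 0$. The citation buys the paper brevity; your derivation buys self-containedness, the observation that the two identities have different logical status (one pointwise-geometric, one requiring minimality or the flow), and the explicit byproduct $\nabla_n h_{nn}=-n\,h_{nn}$, which is precisely what the paper extracts from this lemma in the proof of Proposition \ref{Aboundary}.
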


We are now ready to state our result about the boundary derivative of the second fundamental form squared.

\begin{prop}[Normal derivative of $|A|^2$]
Along the boundary in an orthonormal basis as described above the following hold:
\begin{align*}
\nabla_n|A|^2 = -2|A|^2- 2nh_{nn}^2\,.
\end{align*}
\label{Aboundary}
\end{prop}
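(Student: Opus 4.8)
The plan is to differentiate $|A|^2 = \sum_{i,j} h_{ij}^2$ in the normal direction $\tau_n = \nu^{\S^n}$ and exploit the diagonal structure of the second fundamental form on $\partial M$ in the chosen basis, combined with the normal-derivative identities of Lemma \ref{boundary_derivatives} and the minimality condition $H = 0$. First I would write $\nabla_n |A|^2 = 2\sum_{i,j} h_{ij}\nabla_n h_{ij}$. On the boundary the off-diagonal components $h_{ij}$ with $i\neq j$ vanish by construction; the key point is that even though $\nabla_n h_{ij}$ need \emph{not} vanish for $i\neq j$, each product $h_{ij}\nabla_n h_{ij}$ does vanish there, because the factor $h_{ij}$ is zero. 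Hence the sum collapses to the diagonal, $\nabla_n|A|^2 = 2\sum_{i=1}^n h_{ii}\nabla_n h_{ii}$, and likewise $|A|^2 = \sum_{i=1}^n h_{ii}^2$ along $\partial M$.

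The next step is to determine $\nabla_n h_{nn}$, which is the one piece not directly supplied by Lemma \ref{boundary_derivatives}. I would obtain it by tracing: since $\nabla_n H = \sum_{i=1}^n \nabla_n h_{ii}$ and $\nabla_n H = H = 0$ (by Lemma \ref{boundary_derivatives} together with minimality), one has $\nabla_n h_{nn} = -\sum_{i=1}^{n-1}\nabla_n h_{ii}$. Substituting $\nabla_n h_{ii} = h_{nn} - h_{ii}$ for $i = 1,\ldots,n-1$ and using $\sum_{i=1}^{n-1} h_{ii} = -h_{nn}$ (again from $H = 0$), the sum evaluates to $\nabla_n h_{nn} = -(n-1)h_{nn} - h_{nn} = -n\,h_{nn}$.

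Finally I would assemble the diagonal sum by splitting off the $i = n$ term. For $i = 1,\ldots,n-1$ the contribution $h_{ii}\nabla_n h_{ii} = h_{ii}h_{nn} - h_{ii}^2$ sums to $-h_{nn}^2 - \sum_{i=1}^{n-1}h_{ii}^2$ after using $\sum_{i=1}^{n-1}h_{ii} = -h_{nn}$, while the $i = n$ term gives $h_{nn}\nabla_n h_{nn} = -n\,h_{nn}^2$. Adding these and doubling recovers exactly $\nabla_n|A|^2 = -2\big(\sum_{i=1}^n h_{ii}^2\big) - 2n\,h_{nn}^2 = -2|A|^2 - 2n\,h_{nn}^2$.

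As for the main obstacle: this computation is essentially routine once the diagonalisation of $A$ on the boundary is in hand, so I do not expect a genuine difficulty. The two points deserving care are the derivation of $\nabla_n h_{nn}$, which must be extracted from the trace and minimality constraints rather than read off from Lemma \ref{boundary_derivatives}, and the observation that the off-diagonal products $h_{ij}\nabla_n h_{ij}$ drop out purely because $h_{ij} = 0$ on $\partial M$, notwithstanding that $\nabla_n h_{ij}$ itself may be nonzero.
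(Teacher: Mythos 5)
Your proposal is correct and follows essentially the same route as the paper: the authors likewise invoke Lemma \ref{boundary_derivatives} together with the trace identity $\nabla_n h_{nn}=\nabla_n H-\sum_{i=1}^{n-1}\nabla_n h_{ii}$, which with minimality gives $\nabla_n h_{nn}=-n\,h_{nn}$, and then sum the diagonal terms exactly as you do. Your write-up simply makes explicit the computation the paper leaves to the reader, including the correct observation that the off-diagonal products $h_{ij}\nabla_n h_{ij}$ drop out because $h_{ij}=0$ on $\partial M$ in the chosen basis.
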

\begin{proof}
The result follows by using Lemma \ref{boundary_derivatives} and
\[
\displaystyle \nabla_n h_{nn}=\nabla_n H- \sum_{i=1}^{n-1}\nabla_nh_{ii}\,.
\]
\end{proof}

\section{Minimal graphical hypersurfaces with free boundary are flat disks}
\label{KillingSection}

Now assume that
\begin{align}
	s_V > 0 \label{graphcondition}\,.
\end{align}
We shall prove Theorem \ref{thmuniqueness}.

\begin{proof}
Apply the elliptic maximum principle to the evolution of $Q\geq 0$ to see that
there is no interior maxima. On the boundary we note that $Q=u=0$ using
$F=\nu^{\S^n}$ and the Neumann boundary condition.

Therefore $Q = u^2v_V^2 = 0$ everywhere on $M$.
By hypothesis $v_V \ne 0$ and so the support function vanishes on $M$.
Using now the smoothness assumption, non-flat cones are ruled out, leaving only
the possibility that $M$ is a flat disk.
\end{proof}

\begin{rmk}
It is only necessary to assume that $s_V>0$ on the interior, it may a-priori assume zeroes on the boundary.
\end{rmk}

\begin{rmk}
The bounded curvature and graphicality condition are required to bound the
coefficient of the gradient term in the evolution of $Q$.
\end{rmk}

\section{Curvature gap}

We require the isoperimetric equality stated in our introduction so we derive it here.

\begin{proof}[Proof of Proposition \ref{isoequality}]
This is a simple computation using the divergence theorem. We include
it here for completeness while noting that more general proofs and applications
can be found in \cite{brendle2012,federer2014geometric,hanes1972isoperimetric}.
We calculate
\begin{align*}
0 = \int_{M}\IP{-H\nu^M}{F}d\mu
  = \int_{M}\IP{\Delta F}{F}d\mu
  = -\int_{M}|\nabla F|^2 + \int_{\partial M}\IP{\nabla_{\nu^{\S^n}}F}{F} dS\,,
\end{align*}
where we have used minimality in the first equality, the evolution of the
position vector in the second and the divergence theorem in the last. We denote
the unit outer normal to $\S^n$ by $\nu^{\S^n}$. Due to the perpendicular
boundary condition of the minimal hypersurface we also have
$\nu^{\S^n}=\nu^{\partial M}$.
Note that $\nu^{\S^n}=F$ on $\partial M\subset \S^n$, $|\nabla F|^2=n$, and that
$\nabla_{\nu^{\S^n}}F|_{\partial M^n}=F$ giving us that
$\IP{\nabla_{\nu^{\S^n}}F}{F}=1$ on $\partial M\subset \S^n$. This completes our
proof.
\end{proof}

We are now ready to prove Theorem \ref{thmuniqueness2}.

\begin{proof}[Proof of Theorem \ref{thmuniqueness2}]
  We will use the divergence theorem, this time on the second fundamental form squared.
	Assume for the moment that
\begin{equation}
               \bigg(\sup_{M} |A|^2\bigg)^2 \le n\inf_{\partial M} |A|^2\,.
\label{EQcontradiction}
\end{equation}
Now compute
\begin{align*}
\int_M \Delta |A|^2 d\mu = \int_{\partial M} \nabla_{\nu^{\S^n}} |A|^2 dS
= -\int_{\partial M} 2|A|^2 dS  - \int_{\partial M} 2n h_{nn}^2dS\,,
\end{align*}
where we have used Proposition \ref{Aboundary} in the last equality.
If $n=2$ then $2n h_{nn}^2 = n|A|^2$ and we find
\begin{align*}
\int_M \Delta |A|^2 d\mu 
= -4\int_{\partial M} |A|^2 dS\,.
\end{align*}
If $n>2$, we are unable to use the last term at all since nothing is preventing
$h_{nn}$ from vanishing even though the full $|A|^2$ does not.
In this case we find
\begin{align*}
\int_M \Delta |A|^2 d\mu 
\le -2\int_{\partial M} |A|^2 dS\,.
\end{align*}
In either case, Lemma \ref{Aevolution} implies
\begin{align*}
\int_M |\nabla A|^2d\mu &=  \int_M |A|^4d\mu +  \frac{1}{2}\int_M \Delta |A|^2 d\mu \\
&\leq \int_M |A|^4d\mu  - \int_{\partial M} |A|^2 dS
\leq |M|\bigg(\sup_{M} |A|^2\bigg)^2 - |\partial M|\inf_{\partial M} |A|^2
\,.
\end{align*}
Now \eqref{EQcontradiction} combined with Proposition \ref{isoequality} yields
\begin{align*}
\int_M |\nabla A|^2d\mu \le 0\,.
\end{align*}
This gives us that $|\nabla A|^2\equiv 0$ everywhere on $M$, implying that all
principal curvatures of $F$ are constant. This implies the second fundamental
form squared is constant and using the boundary derivative from Proposition
\ref{Aboundary} we see that $|A|^2\equiv 0$. Thus $M$ is a part of a plane. The
only plane with perpendicular boundary condition are equatorial disks.
But this is a contradiction with our hypothesis that $M$ is not a
standard flat disk.

Therefore the assumption \eqref{EQcontradiction} is false, and we conclude the
result of the theorem.
\end{proof}

\section*{acknowledgements}

The authors would like to thank Frank Morgan for his interest in this small contribution and for helping point out the correct references. The second author is supported by ARC grant Discovery grant DP150100375 at the University of Wollongong.

\bibliographystyle{plain}
\bibliography{mbib}

\begin{thebibliography}{10}

\bibitem{almgren1}
Frederick~J Almgren.
\newblock The homotopy groups of the integral cycle groups.
\newblock {\em Topology}, 1(4):257--299, 1962.

\bibitem{almgren2}
Frederick~J Almgren.
\newblock The theory of varifolds.
\newblock {\em Mimeographed notes, Princeton}, 1965.

\bibitem{ambrozio2016index}
Lucas Ambrozio, Alessandro Carlotto, and Ben Sharp.
\newblock Index estimates for free boundary minimal hypersurfaces.
\newblock {\em Mathematische Annalen}, pages 1--16, 2016.

\bibitem{ambrozio2016gap}
Lucas Ambrozio and Ivaldo Nunes.
\newblock A gap theorem for free boundary minimal surfaces in the three-ball.
\newblock {\em arXiv preprint arXiv:1608.05689}, 2016.

\bibitem{brendle2012}
Simon Brendle.
\newblock A sharp bound for the area of minimal surfaces in the unit ball.
\newblock {\em Geometric and Functional Analysis}, 22(3):621--626, 2012.

\bibitem{choe2011capillary}
Jaigyoung Choe and Sung-Ho Park.
\newblock Capillary surfaces in a convex cone.
\newblock {\em Mathematische Zeitschrift}, 267(3-4):875--886, 2011.

\bibitem{ecker1989mce}
Klaus Ecker and Gerhard Huisken.
\newblock {Mean curvature evolution of entire graphs}.
\newblock {\em Annals of Mathematics}, 130(2):453--471, 1989.

\bibitem{federer2014geometric}
Herbert Federer.
\newblock {\em Geometric measure theory}.
\newblock Springer, 2014.

\bibitem{fraser2011first}
Ailana Fraser and Richard Schoen.
\newblock The first steklov eigenvalue, conformal geometry, and minimal
  surfaces.
\newblock {\em Advances in Mathematics}, 226(5):4011--4030, 2011.

\bibitem{fraser2012sharp}
Ailana Fraser and Richard Schoen.
\newblock Sharp eigenvalue bounds and minimal surfaces in the ball.
\newblock {\em Inventiones mathematicae}, pages 1--68, 2012.

\bibitem{fraser2014uniqueness}
Ailana Fraser and Richard Schoen.
\newblock Uniqueness theorems for free boundary minimal disks in space forms.
\newblock {\em International Mathematics Research Notices}, 2015(17), 2015.

\bibitem{hanes1972isoperimetric}
Kit Hanes.
\newblock Isoperimetric inequalities for manifolds with boundary.
\newblock {\em Journal of Differential Geometry}, 7(3-4):525--534, 1972.

\bibitem{li2017gap}
Haizhong Li and Changwei Xiong.
\newblock A gap theorem for free boundary minimal surfaces in geodesic balls of
  hyperbolic space and hemisphere.
\newblock {\em arXiv preprint arXiv:1701.04981}, 2017.

\bibitem{GLZ2017}
Martin Li and Xin Zhou.
\newblock Curvature estimates for stable free boundary minimal hypersurfaces.
\newblock {\em arXiv preprint arXiv:1611.02605}, 2016.

\bibitem{li2016min}
Martin Li and Xin Zhou.
\newblock Min-max theory for free boundary minimal hypersurfaces i-regularity
  theory.
\newblock {\em arXiv preprint arXiv:1611.02612}, 2016.

\bibitem{lopez2014capillarycone}
Rafael L{\'o}pez and Juncheol Pyo.
\newblock Capillary surfaces in a cone.
\newblock {\em Journal of Geometry and Physics}, 76:256--262, 2014.

\bibitem{lopez2014capillary}
Rafael L{\'o}pez and Juncheol Pyo.
\newblock Capillary surfaces of constant mean curvature in a right solid
  cylinder.
\newblock {\em Mathematische Nachrichten}, 287(11-12):1312--1319, 2014.

\bibitem{monspad2017}
Andrea Mondino and Emanuele Spadaro.
\newblock On an isoperimetric-isodiametric inequality.
\newblock {\em Analysis \& PDE}, 10(1):95--126, 2017.

\bibitem{nitsche1985}
Johannes C~C Nitsche.
\newblock Stationary partitioning of convex bodies.
\newblock {\em Archive for rational mechanics and analysis}, 89(1):1--19, 1985.

\bibitem{pitts}
Jon~T Pitts.
\newblock {\em Existence and Regularity of Minimal Surfaces on Riemannian
  Manifolds}, volume~27 of {\em Mathematical Notes}.
\newblock Princeton University Press, 1981.

\bibitem{sargent2016}
Pam Sargent.
\newblock Index bounds for free boundary minimal surfaces of convex bodies.
\newblock {\em arXiv preprint arXiv:1605.09143}, 2016.

\bibitem{schoen1981}
Richard Schoen and Leon Simon.
\newblock Regularity of stable minimal hypersurfaces.
\newblock {\em Communications in Pure and Applied Mathematics}, 34(6):741--797,
  1981.

\bibitem{smith2017morse}
Graham Smith, Ari Stern, Hung Tran, and Detang Zhou.
\newblock On the morse index of higher-dimensional free boundary minimal
  catenoids.
\newblock {\em arXiv preprint arXiv:1709.00977}, 2017.

\bibitem{stahl1996convergence}
Axel Stahl.
\newblock Convergence of solutions to the mean curvature flow with a neumann
  boundary condition.
\newblock {\em Calculus of Variations and Partial Differential Equations},
  4(5):421--441, 1996.

\bibitem{tran2016index}
Hung Tran.
\newblock Index characterization for free boundary minimal surfaces.
\newblock {\em arXiv preprint arXiv:1609.01651}, 2016.

\bibitem{vogel1988uniqueness}
Thomas Vogel.
\newblock Uniqueness for certain surfaces of prescribed mean curvature.
\newblock {\em Pacific Journal of Mathematics}, 134(1):197--207, 1988.

\end{thebibliography}

\end{document}